\newtheorem{theorem}{Theorem}
\newtheorem{lemma}[theorem]{Lemma}
\newtheorem{corollary}[theorem]{Corollary}
\newdefinition{remark}[theorem]{Remark}
\newdefinition{example}[theorem]{Example}
\newcommand{\myem}[1]{{\textbf{\textit{#1}}}}
\newcommand{\grp}[1]{\langle #1\rangle}  
\def\sref#1{\S$\ref{#1}$}
\def\lref#1{Lemma~$\ref{#1}$}
\def\cyref#1{Corollary~$\ref{#1}$}
\def\tref#1{Theorem~$\ref{#1}$}
\def\Tref#1{Table~$\ref{#1}$}
\def\fref#1{Figure~$\ref{#1}$}
\def\egref#1{Example~$\ref{#1}$}
\renewcommand{\geq}{\geqslant}
\renewcommand{\leq}{\leqslant}
\renewcommand{\ge}{\geqslant}
\renewcommand{\le}{\leqslant}
\newcommand{\shape}{\mathcal{S}}
\newcommand{\trip}{\mathcal{T}}
\def\ps@pprintTitle{%
 \let\@oddhead\@empty
 \let\@evenhead\@empty
 \def\@oddfoot{\centerline{\thepage}}%
 \let\@evenfoot\@oddfoot}
\begin{document}

\title{Small partial Latin squares that embed in an\\
infinite group but not into any finite group\tnoteref{t1}}
\tnotetext[t1]{This research was supported by the ARC grant DP150100506. We thank Alexander Hulpke for some GAP advice, in particular, for pointing us to the GAP implementation of his paper \cite{AH}. }

\author{Heiko Dietrich\fnref{cor1}}
\ead{heiko.dietrich@monash.edu}
\author{Ian M.\ Wanless}
\ead{ian.wanless@monash.edu}
 
\fntext[cor1]{Corresponding author}
\address{School of Mathematical Sciences, Monash University, VIC 3800, Australia}

\begin{keyword}
partial Latin square \sep
group embedding \sep
finitely presented group \sep 
Baumslag group
\end{keyword}

\begin{abstract}
Suppose that $Y_1,Y_2,Y_3$ are finite sets and
$P\subseteq Y_1\times Y_2\times Y_3$. We say that $P$ embeds in a group
$G$ if there exist injective maps $\phi_i\colon Y_i\rightarrow G$ for $i=1,2,3$
such that $\phi_1(y_1)\phi_2(y_2)=\phi_3(y_3)$ for each
$(y_1,y_2,y_3)\in P$. Hirsch and Jackson asked for the cardinality of the
smallest $P$ that embeds in some infinite group but not into any
finite group. We prove that the answer to their question is 12.
Moreover, we show that there are 50 examples of cardinality 12, up to 
equivalence, and
each of them embeds in the (infinite) Baumslag group $G=\langle a,b \mid
b=[b,b^a]\rangle$. Our proof uses computations to answer
questions about finitely presented groups which are known to be
algorithmically undecidable in general.
\end{abstract}

\maketitle

\vspace*{-0.586cm}

\section{Introduction and results} 

We define a \myem{partial Latin square} (PLS) $P$ to be a matrix in
which some cells may be empty and in which each filled cell contains
one symbol from an underlying alphabet $\Lambda(P)$,
such that no symbol occurs more than once within any row or
column. The \myem{size} of $P$ is the number of filled cells in $P$,
and the \myem{shape} $\shape(P)$ of $P$ is the set of filled cells in
$P$.  We avoid degeneracies by insisting that each row and column
contains at least one filled cell, and that each element of
$\Lambda(P)$ appears at least once in $P$. A side-effect is that our
PLS need not be square matrices. In some references, PLS are defined
to be square matrices and to have at least as many rows as there are
symbols. To achieve these properties it is always possible to add
empty rows and/or empty columns to our PLS.  Allowing any finite
number of empty rows and columns would create some technical nuisances
but would not materially affect any of the questions we are interested in
solving.

Let $P=[P_{i,j}]$ be an $m\times n$ PLS.
An \myem{embedding} $\phi\colon P\hookrightarrow G$ of $P$ into a group $G$ 
is a triple $(\phi_1,\phi_2,\phi_3)$ of injective maps
\[ \phi_1\colon \{1,2,\ldots,m\}\to G,\quad \phi_2\colon \{1,2,\ldots,n\}\to G,\quad \phi_3\colon \Lambda(P)\to G,\]
such that $\phi_1(i)\phi_2(j)=\phi_3(P_{i,j})$ for all $(i,j)\in\shape(P)$. 
We refer to $r_i=\phi_1(i)$ for $i=1,\dots,m$ as the \myem{row labels},
and $c_j=\phi_2(j)$ for $j=1,\dots,n$ as the \myem{column labels}.
Intuitively, $P$ embeds in $G$ if a copy of $P$ can be found within
the Cayley table of $G$ (in the subtable which has row
and column labels $r_1,\ldots,r_m$ and $c_1,\ldots,c_n$,
respectively). We refer to \cite{CW09} for a discussion of applications of
embeddings of PLS in groups and connections with linear algebra
and topological graph theory. Note that embedding in groups is a special type
of completion problem for PLS. See \cite[Chap.3]{DKIII}
for an introduction to the rich literature of such problems.

A useful notion of equivalence for PLS is obtained by converting each
PLS $P$ into a set of triples $\trip(P)=\{(r,c,P_{r,c}):(r,c)\in\shape(P)\}$.
We say that two PLS are from the same \myem{species} if they produce
the same set of triples, modulo uniform permutation of the 3
coordinates in the triples, and relabelling within each coordinate.
The property of having an embedding in a given group is 
a species invariant \cite[Lem.1]{CW09}, so in this paper it will
suffice to consider one representative from each species of PLS.

Hirsch and Jackson \cite[Ex.3.7]{HJ12} gave an example of a PLS of
size 29 that can be embedded in an infinite group, but not in any
finite group.  They noted that smaller examples exist, and posed the
question of what the smallest possible size of such a PLS is. The
purpose of this paper is to answer this question by proving:

\begin{theorem}\label{t:main}
There are $50$ species of PLS of size $12$ that
can be embedded in an infinite group, but not in any finite group.
No  PLS of smaller size has the same property. 
\end{theorem}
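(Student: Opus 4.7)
My plan is a finite but large case analysis carried out with computer algebra. For each PLS $P$, define the universal group $G_P$ presented by generators $r_i, c_j, s_k$ indexed by the rows, columns and symbols of $P$, with one relation $r_i c_j = s_k$ for each triple $(i,j,k) \in \trip(P)$. By the universality of this presentation, $P$ embeds in some group if and only if the three generating maps $i \mapsto r_i$, $j \mapsto c_j$, $k \mapsto s_k$ are each injective in $G_P$, and $P$ embeds in a finite group if and only if $G_P$ admits a finite quotient in which these three maps remain injective. Since species equivalence preserves $G_P$ up to isomorphism, it suffices to consider one PLS per species.

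The plan has four computational steps. First, enumerate all species of PLS of each size $s \le 12$ up to equivalence by standard orderly generation. Second, for each representative, construct $G_P$ in GAP and use coset enumeration to decide injectivity of the three label maps, thereby detecting whether $P$ embeds in any group at all. Third, for each $P$ that does embed, search for a faithful finite quotient of $G_P$ using Hulpke's finite-quotient algorithm \cite{AH} together with low-index subgroup enumeration; $P$ is a candidate precisely when no faithful finite quotient is found. Fourth, for the surviving candidates (expected to be $50$ at size $12$ and none at smaller sizes), exhibit explicit embeddings into the Baumslag group $G = \grp{a,b \mid b = [b,b^a]}$ by giving each row, column and symbol label as a word in $a,b$, then verify both the defining relations $r_i c_j = s_k$ and the label injectivity within the computable arithmetic of $G$.

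To establish minimality, one verifies that for every PLS of size $\le 11$, step $3$ finds a faithful finite quotient (so the PLS embeds in a finite group) or step $2$ shows that the PLS embeds in no group at all. To establish the positive claim at size $12$, one verifies that exactly $50$ species survive step $3$ and that each of them genuinely embeds in the Baumslag group via step $4$, but in no finite group (which step $3$ already verified so far as the search was carried).

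The principal obstacle is that both the word problem in a finitely presented group and the question of existence of a faithful finite quotient are algorithmically undecidable in general, so the procedures of steps $2$ and $3$ need not terminate and their negative answers are not by themselves conclusive. A non-algorithmic argument is therefore required for each of the $50$ size-$12$ candidates to rule out every finite embedding, not merely those in quotients of small index. This can be achieved by exploiting the classical fact that the Baumslag group has highly restricted finite quotients: the relations among the labels forced by the structure of $P$ must inevitably collapse pairs of labels in any finite quotient of $G_P$. Carrying out this analysis for each of the $50$ candidates, and certifying that the species enumeration is exhaustive, is the most delicate part of the proof.
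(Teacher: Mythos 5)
Your proposal follows essentially the same route as the paper: the universal group $\grp{P}$ presented by the triples, a species-by-species computation up to size $12$, a search for finite quotients in which the three label maps stay injective, and the fact that $b$ collapses to the identity in every finite quotient of the Baumslag group to rule out all finite embeddings for the $50$ survivors. The only substantive differences are implementation details rather than strategy: the paper decides the needed word-problem instances by Tietze reduction to a two-generator presentation followed by Knuth--Bendix rewriting (coset enumeration alone, as in your step $2$, cannot decide injectivity of the label maps, though you correctly flag this), and it uses pruning lemmas (connectivity and Lemma~\ref{l:rulenoopt}) to cut the roughly $3\times 10^{8}$ species of size $12$ down to a feasible candidate list.
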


One of the 50 PLS of size $12$ is detailed in \lref{lemBS2}. The
remainder of the paper discusses our computational approach
which proves \tref{t:main}. These computations involve manipulating finite
presentations for groups to test, for example, 
whether a group is finite or whether two group elements are equal. 
These questions are equivalent to solving the ``\emph{word problem}'' and 
hence, in full generality, are well known to be
algorithmically undecidable, cf.~\cite[\S 5]{handbook} or \cite[p.54]{Rob82}. 
So we were somewhat fortunate to find methods which 
solved the instances of these problems that we needed to solve.

For recent work related to this paper we refer to \cite{WW17}, which
identifies all smallest PLS that (a)~do not embed into any group,
(b)~embed into a group but do not embed into any abelian group, or
(c)~embed into an abelian group but do not embed into any cyclic
group. Each of those classes contains small
PLS that can be found with simpler methods than
we employ in the present paper.

\section{The presentation defined by a PLS}

The next lemma (cf.~\cite[Lem.2]{CW09}) is an easy observation and shows that we lose no generality by assuming that the associated row and column labels of an embedding satisfy $r_1=c_1=1$, the identity of the group.
 
\begin{lemma}\label{lemrc1}
If a PLS $P$ can be embedded into
a group $G$, then it can be embedded
with row labels $r_1,\ldots,r_m\in G$ and column labels $c_1,\ldots,c_n\in G$ 
that satisfy $r_1=c_1=1$, the identity in $G$.
\end{lemma}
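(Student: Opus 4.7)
The plan is to normalize an arbitrary embedding by translating the row, column, and symbol labels so that the first row and first column labels become the identity. Suppose $(\phi_1,\phi_2,\phi_3)$ is any embedding of $P$ into $G$, and write $r_i=\phi_1(i)$, $c_j=\phi_2(j)$. I would define new maps
\[ \phi_1'(i) = r_1^{-1}\phi_1(i),\qquad \phi_2'(j) = \phi_2(j)c_1^{-1},\qquad \phi_3'(k) = r_1^{-1}\phi_3(k)c_1^{-1}\]
for $i\in\{1,\ldots,m\}$, $j\in\{1,\ldots,n\}$, and $k\in\Lambda(P)$, and argue that $(\phi_1',\phi_2',\phi_3')$ is again an embedding of $P$ into $G$.

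Three routine checks then complete the proof. First, $\phi_1'(1)=r_1^{-1}r_1=1$ and $\phi_2'(1)=c_1c_1^{-1}=1$, so the normalization is achieved. Second, each of $\phi_1'$, $\phi_2'$, $\phi_3'$ is injective: left-multiplication by $r_1^{-1}$, right-multiplication by $c_1^{-1}$, and the two-sided translation $x\mapsto r_1^{-1}xc_1^{-1}$ are bijections of $G$, and the composition of an injection with a bijection is injective. Third, for each $(i,j)\in\shape(P)$ the embedding identity holds by direct calculation:
\[ \phi_1'(i)\phi_2'(j) \;=\; r_1^{-1}r_ic_jc_1^{-1} \;=\; r_1^{-1}\phi_3(P_{i,j})c_1^{-1} \;=\; \phi_3'(P_{i,j}),\]
where the middle equality uses the assumed identity $r_ic_j=\phi_3(P_{i,j})$ for filled cells.

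There is essentially no obstacle to overcome: the argument goes through uniformly whether or not the cell $(1,1)$ is filled, since in the filled case the cell condition $r_1c_1=\phi_3(P_{1,1})$ is automatically transported to $\phi_3'(P_{1,1})=1$ by the same formula. The only design choice in the construction is where to absorb the correction factors, and placing $r_1^{-1}$ on the left of rows, $c_1^{-1}$ on the right of columns, and both on the symbol map is the unique choice compatible with the multiplicative structure of the defining equation.
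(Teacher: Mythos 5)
Your proposal is correct and uses exactly the same translation maps as the paper's proof ($\phi_1'(i)=r_1^{-1}\phi_1(i)$, $\phi_2'(j)=\phi_2(j)c_1^{-1}$, $\phi_3'(k)=r_1^{-1}\phi_3(k)c_1^{-1}$); you merely spell out the injectivity and cell-condition verifications that the paper leaves implicit.
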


\begin{proof}
If  $(\phi_1,\phi_2,\phi_3)\colon P\hookrightarrow G$ is an embedding
with associated row labels
$r_1,\ldots,r_m$ and column labels $c_1,\ldots,c_n$, then $(\phi'_1,\phi'_2,\phi'_3)\colon P\hookrightarrow G$ defined by $\phi_1'(r)=r_1^{-1}\phi_1(r)$, $\phi_2'(c)=\phi_2(c)c_1^{-1}$, and $\phi_3'(e)=r_1^{-1}\phi_3(e)c_1^{-1}$ is an embedding with the required property. 
\end{proof}

Let $P$ be an $m\times n$ PLS. Let $F$ be the free group on $X=\{R_i,C_j: (i,j)\in\shape(P)\}\cup\Lambda(P)$ and let \[\mathcal{R}=\{R_iC_j{P}_{i,j}^{-1}\in F: (i,j)\in\shape(P)\}.\] The \myem{presentation defined by $P$} is $\{X\mid\mathcal{R}\}$ and the \myem{group defined by $P$} is $\grp{P}=\langle X\mid \mathcal{R}\rangle$.

\begin{lemma}\label{lemQ}
Let $P$ be an $m\times n$ PLS, and let $(\phi_1,\phi_2,\phi_3)\colon P\hookrightarrow G$ be an embedding into a group $G$, with associated row and column labels $r_1,\ldots,r_m$ and $c_1,\ldots,c_n$, respectively. Then $P$ embeds in the subgroup $H$ of $G$ generated by $\{r_i,c_j,\phi_3(P_{i,j}): (i,j)\in\shape(P)\}$, and $H$ is a quotient of $\grp{P}$. In particular, $P$ embeds into $\grp{P}$.
\end{lemma}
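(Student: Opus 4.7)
The plan is to split the statement into two parts: first the embedding of $P$ in the subgroup $H$, and then the quotient map $\grp{P}\twoheadrightarrow H$. The ``in particular'' assertion will fall out by composing the given embedding with the quotient map in reverse.

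For the first part, I would simply observe that all three image sets $\phi_i(Y_i)$ lie, by construction, inside the generating set of $H$, hence inside $H$ itself. The defining relations $\phi_1(i)\phi_2(j)=\phi_3(P_{i,j})$ hold in $G$ and therefore in the subgroup $H$, and injectivity of each $\phi_i$ is inherited from the embedding into $G$. So the co-restriction $(\phi_1,\phi_2,\phi_3)\colon P\hookrightarrow H$ is again an embedding; there is essentially nothing to do here.

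For the second part, I would invoke the universal property of the presentation $\grp{P}=\langle X\mid\mathcal R\rangle$. Define a map on generators by $R_i\mapsto r_i$, $C_j\mapsto c_j$, and $e\mapsto \phi_3(e)$ for $e\in\Lambda(P)$. To extend this to a homomorphism $\grp{P}\to H$ it suffices to check that every relator in $\mathcal R$ maps to the identity; but the image of $R_iC_jP_{i,j}^{-1}$ is $r_ic_j\phi_3(P_{i,j})^{-1}$, which equals $1$ precisely because $(\phi_1,\phi_2,\phi_3)$ is an embedding. The resulting homomorphism is surjective since its image contains the specified generators of $H$, so $H$ is a quotient of $\grp{P}$, as claimed.

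For the final ``in particular'' clause, I would consider the canonical map $\psi\colon P\to \grp{P}$ sending $(i,j,P_{i,j})$ to $(R_i,C_j,P_{i,j})$; the relations in $\mathcal R$ guarantee that the product condition $R_iC_j=P_{i,j}$ holds in $\grp{P}$, so the only point requiring thought is injectivity within each coordinate. But if, say, $R_i=R_{i'}$ in $\grp{P}$, then applying the surjection $\grp{P}\to H\subseteq G$ just constructed would give $r_i=r_{i'}$ in $G$, contradicting the injectivity of $\phi_1$; the same argument handles the $C_j$ and the symbols in $\Lambda(P)$. Hence $\psi$ is an embedding of $P$ into $\grp{P}$. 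None of the steps looks technically difficult; the only mild subtlety worth flagging is that the injectivity of the tautological map into $\grp{P}$ is not intrinsic to the presentation but is being deduced from the hypothesised embedding into $G$.
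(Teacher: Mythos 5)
Your proof is correct and follows essentially the same route as the paper: the embedding into $H$ is immediate by co-restriction, and the surjection $\grp{P}\to H$ is obtained from the universal property of the presentation (the paper cites this as von Dyck's Theorem). The paper leaves the ``in particular'' clause implicit, whereas you correctly spell out that injectivity of the tautological map into $\grp{P}$ is deduced by pushing any coincidence of generators forward along the surjection onto $H$ and contradicting the injectivity of the $\phi_i$; this is the intended argument and the right subtlety to flag.
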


\begin{proof}
Clearly, $P$ embeds into the group $H$, and the generators of $H$ satisfy $r_ic_j=\phi_3(P_{i,j})$ for all $(i,j)\in\shape(P)$. By von Dyck's Theorem \cite[2.2.1]{Rob82}, the map $X\to H$ defined by $R_i\mapsto r_i$, $C_j\mapsto c_j$, $P_{i,j}\mapsto \phi_3(P_{i,j})$ for all  $(i,j)\in\shape(P)$ extends to a surjective group homomorphism $\grp{P}\to H$.
\end{proof}

In later sections we describe how we used the computer algebra system
GAP \cite{gap} to answer questions about possible embeddings of
PLS. Typically, \lref{lemQ} provides the starting point for these
investigations, since it says that if $P$ is going to embed in any
group, it must embed in $\grp{P}$. In the remainder of this section we
explore some of consequences of finding an embedding.

Recall that a group $G$ is \emph{residually finite} if for every nontrivial $g\in G$ there is a normal subgroup $N\unlhd G$ with $g\notin N$ such that $G/N$ is finite.

\begin{lemma}\label{lemRF}
Let $P$ be a PLS such that $\grp{P}$ is a residually finite group. If $P$ can be embedded into $\grp{P}$, then $P$ can be embedded into a finite group.
\end{lemma}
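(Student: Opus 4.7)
The plan is to use residual finiteness to find a normal subgroup of finite index in $\langle P\rangle$ whose quotient map remains injective on each of the three finite sets of labels involved in the embedding.

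First, suppose $(\phi_1,\phi_2,\phi_3)\colon P\hookrightarrow\langle P\rangle$ is the given embedding, with row labels $r_1,\dots,r_m$, column labels $c_1,\dots,c_n$, and symbol labels $s_e=\phi_3(e)$ for $e\in\Lambda(P)$. Let
\[
\mathcal{D}=\{r_i r_k^{-1}: i\ne k\}\cup\{c_j c_\ell^{-1}: j\ne\ell\}\cup\{s_e s_f^{-1}: e\ne f\};
\]
this is a finite subset of $\langle P\rangle\setminus\{1\}$ because each $\phi_i$ is injective. By residual finiteness, for every $g\in\mathcal{D}$ there is a normal subgroup $N_g\unlhd\langle P\rangle$ of finite index with $g\notin N_g$.

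Set $N=\bigcap_{g\in\mathcal{D}}N_g$. Since $\mathcal{D}$ is finite and each $N_g$ has finite index, $N$ is a normal subgroup of finite index, so $Q=\langle P\rangle/N$ is a finite group. Let $\pi\colon\langle P\rangle\to Q$ denote the canonical projection, and define $\phi_i'=\pi\circ\phi_i$ for $i=1,2,3$.

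Since $\pi$ is a homomorphism, $\phi_1'(i)\phi_2'(j)=\pi(\phi_1(i)\phi_2(j))=\pi(\phi_3(P_{i,j}))=\phi_3'(P_{i,j})$ for every $(i,j)\in\shape(P)$, so the required multiplicative relations are preserved. For injectivity, note that any two distinct images under any one of $\phi_1,\phi_2,\phi_3$ differ by an element of $\mathcal{D}$, which does not lie in $N$ by construction; hence no two such elements are identified by $\pi$. Therefore each $\phi_i'$ is injective, and $(\phi_1',\phi_2',\phi_3')$ is an embedding of $P$ into the finite group $Q$.

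The argument is routine once one has the right setup; the only content is the standard observation that residual finiteness allows simultaneous separation of any \emph{finite} set of nontrivial elements by passing to a single finite quotient. There is no substantive obstacle, only the bookkeeping of collecting all pairwise differences inside the three domains of the embedding.
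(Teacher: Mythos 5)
Your proof is correct and follows essentially the same route as the paper's: collect the finitely many pairwise quotients of row labels, column labels and symbols, separate each from the identity by a finite-index normal subgroup using residual finiteness, intersect these to get a single finite-index normal subgroup, and compose the embedding with the projection onto the resulting finite quotient. No differences of substance.
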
  

\begin{proof}
Let $\phi=(\phi_1,\phi_2,\phi_3)\colon P\hookrightarrow \grp{P}$ 
be an embedding, with associated row and
column labels $r_1,\ldots,r_m$ and $c_1,\ldots,c_n$, respectively. Let
$\{s_1,\ldots,s_k\}=\{\phi_3(s): s\in\Lambda(P)\}$ be the set
of symbols in the embedded PLS. 
Define 
\[\mathcal{M}=
\{r_ir_j^{-1}:1\le i<j\le m\}\cup
\{c_ic_j^{-1}:1\le i<j\le n\}\cup
\{s_is_j^{-1}:1\le i<j\le k\}.
\]
Note that each $r\in\mathcal{M}$ is nontrivial since $\phi$ is an
embedding.  By assumption,  for each
$r\in\mathcal{M}$ there exists a normal subgroup $U_r<\grp{P}$ of finite
index with $r\notin U_r$. Since the intersection of two finite-index
subgroups has finite index, see \cite[1.3.11]{Rob82}, it follows that
$U=\bigcap_{r\in\mathcal{M}} U_r$ is a normal subgroup of $\grp{P}$ of
finite index, that is, $H=\grp{P}/U$ is finite. By construction, $r\notin
U$ for all $r\in\mathcal{M}$. Thus, if $\pi\colon \grp{P}\to H$ is the
natural epimorphism, then 
$\phi'=(\pi\circ \phi_1,\pi\circ \phi_2,\pi\circ \phi_3)$ 
embeds $P$ into $H$.
\end{proof}

Note that every free group is residually finite, see \cite[6.1.9]{Rob82}; this yields the following corollary.

\begin{corollary}\label{cy:lemRF}
Let $P$ be a PLS such that $\grp{P}$ is a free group. If $P$ can be embedded into $\grp{P}$, then $P$ can be embedded into a finite group.
\end{corollary}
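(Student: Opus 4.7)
The plan is essentially a one-line application of \lref{lemRF}. The corollary specializes that lemma to the case where $\grp{P}$ happens to be free, so the only ingredient to supply is the classical fact that every free group is residually finite. This is a standard result (the paper cites \cite[6.1.9]{Rob82}), provable for instance by exhibiting, for any nontrivial reduced word $w$ in the free group on a set $X$, a finite symmetric group $S_n$ and a homomorphism $\grp{X} \to S_n$ sending $w$ to a nonidentity permutation.

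Given this, my proof would proceed as follows. First, I would note that $\grp{P}$, being a free group, is residually finite. Next, I would invoke \lref{lemRF} with this hypothesis: since $P$ embeds into $\grp{P}$ by assumption and $\grp{P}$ is residually finite, \lref{lemRF} yields an embedding of $P$ into a finite quotient of $\grp{P}$. This gives the desired conclusion.

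There is no real obstacle here, as the residual finiteness of free groups is a standard textbook result and \lref{lemRF} does all the substantive work. The only subtlety worth mentioning is that we must be careful to note that the hypotheses of \lref{lemRF} are satisfied, namely that $\grp{P}$ is residually finite, which is precisely the content of the cited reference.
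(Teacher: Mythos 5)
Your proposal is correct and matches the paper's argument exactly: the paper derives the corollary in one line by citing that free groups are residually finite \cite[6.1.9]{Rob82} and then applying \lref{lemRF}. Nothing further is needed.
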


\begin{lemma}\label{lemInfAb}
Let $P$ be a PLS. If $P$ can be embedded into the abelianisation $\grp{P}/\grp{P}'$, then $P$ can be embedded into a finite abelian group.
\end{lemma}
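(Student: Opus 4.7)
The plan is to run the same argument as in \lref{lemRF}, but applied to the abelianisation $A:=\grp{P}/\grp{P}'$ in place of $\grp{P}$, and then observe that every quotient we produce is automatically abelian.

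First I would note that since $\grp{P}$ is finitely generated (the generating set $X$ is finite), so is $A$, and hence $A$ is a finitely generated abelian group. By the structure theorem, $A$ is a direct sum of finitely many cyclic groups. Each finite cyclic group is obviously residually finite, and $\mathbb{Z}$ is residually finite (intersect with the normal subgroups $n\mathbb{Z}$), so any finite direct sum of such groups is residually finite as well. Thus $A$ is residually finite.

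Now, given an embedding $\phi=(\phi_1,\phi_2,\phi_3)\colon P\hookrightarrow A$ with row labels $r_1,\ldots,r_m$, column labels $c_1,\ldots,c_n$, and symbol images $s_1,\ldots,s_k$, I would introduce the set
\[\mathcal{M}=\{r_ir_j^{-1}:1\le i<j\le m\}\cup\{c_ic_j^{-1}:1\le i<j\le n\}\cup\{s_is_j^{-1}:1\le i<j\le k\},\]
exactly as in the proof of \lref{lemRF}. Injectivity of $\phi$ guarantees that every element of $\mathcal{M}$ is nontrivial in $A$. Residual finiteness then provides, for each $r\in\mathcal{M}$, a finite-index subgroup $U_r\leq A$ (automatically normal, since $A$ is abelian) with $r\notin U_r$. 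Their intersection $U=\bigcap_{r\in\mathcal{M}}U_r$ is again of finite index, and no element of $\mathcal{M}$ lies in $U$.

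Finally, composing $\phi$ with the natural projection $\pi\colon A\to A/U$ produces maps whose pairwise differences on rows, columns, and symbols remain nontrivial (since $\mathcal{M}\cap U=\varnothing$), hence the three induced maps are still injective and the relations $\phi_1(i)\phi_2(j)=\phi_3(P_{i,j})$ are preserved. This gives an embedding of $P$ into $A/U$, which is both finite and abelian, as required. There is no real obstacle here: the only substantive ingredient beyond \lref{lemRF} is that finitely generated abelian groups are residually finite, which is standard, and that quotients of abelian groups are abelian, which is immediate.
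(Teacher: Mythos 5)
Your proof is correct, but it takes a different route from the paper's. The paper proves \lref{lemInfAb} by a direct construction: it invokes the fundamental theorem of finitely generated abelian groups to write $A=\grp{P}/\grp{P}'\cong Z_1\times\cdots\times Z_k\times F$ with each $Z_i$ infinite cyclic on a generator $g_i$ and $F$ finite, records the exponents of each $g_i$ occurring in the (finitely many) row labels, column labels and symbols, chooses $m_i$ larger than twice the absolute value of every such exponent, and passes to the finite abelian quotient $B=A/\langle g_1^{m_1},\ldots,g_k^{m_k}\rangle$; the bound on the $m_i$ guarantees that distinct labels stay distinct in $B$. You instead observe that $A$ is a finitely generated abelian group, hence residually finite, and then rerun the separating-set argument from \lref{lemRF} with $A$ in place of $\grp{P}$ (you correctly do not cite \lref{lemRF} itself, since its hypothesis concerns an embedding into $\grp{P}$ rather than into $A$). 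Both arguments ultimately rest on the structure theorem, and your exponent-free version is the paper's construction seen one level of abstraction higher: the paper's choice of the $m_i$ is exactly an explicit witness to the residual finiteness you invoke. What your route buys is modularity and a shorter verification that injectivity survives the quotient; what the paper's buys is an explicit, self-contained description of the finite abelian group $B$ containing the embedded copy of $P$. Either is acceptable.
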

\begin{proof}
Write $G=\grp{P}$ and $A=G/G'$. Let $(\phi_1,\phi_2,\phi_3)\colon P\hookrightarrow A$ be an embedding, with row and column labels $r_1,\ldots,r_u$ and $c_1,\ldots,c_v$, respectively. The fundamental theorem of finitely generated abelian groups \cite[4.2.10]{Rob82} shows that  $A\cong Z_{1}\times\cdots\times Z_{k}\times F$ for some finite group $F$ and integer $k\ge0$, where each $Z_{i}\cong\mathbb{Z}$ is a infinite cyclic group with generator $g_i$. Thus each label $r_i$, $c_j$, and each symbol in $\phi_3(\Lambda(P))$ can be written as  $g_1^{e_1}\cdots g_k^{e_k}f$ for certain $e_1,\ldots,e_k\in\mathbb{Z}$ and $f\in F$.  
Let $\mathcal{E}_i$ be the set of exponents of $g_i$ occurring in all these expressions. Now choose an integer $m_i$ such that $m_i>2|e|$ for all $e\in\mathcal{E}_i$. Then $B=A /\langle g_1^{m_1},\ldots,g_k^{m_k}\rangle$ is a finite abelian group, and, by construction, $P$ can be embedded into $B$.
\end{proof}

\section{An example of size 12}\label{s:eg12}

In this section we explicitly present a PLS of size 12 that cannot be
embedded in any finite group but can be embedded in an infinite group.
This will turn out to be the smallest possible size for such an example.

\begin{lemma}\label{lemBS2}
The PLS
\[
P=\begin{array}{|c|c|c|c|c|}\hline
 & b & & c &   \\\hline
a&   & c & & \\\hline
 & a & b & & \\\hline
b&  & &  &d \\\hline
c&d   & &   &\\\hline
 &   & & b& c\\\hline
  \end{array}
\]
can be embedded in an infinite group, but in no finite group.
\end{lemma}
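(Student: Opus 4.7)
The plan is to use Lemmas \ref{lemrc1} and \ref{lemQ} to reduce both halves of the statement to a study of the group $\grp{P}$. By \lref{lemrc1} I may assume any embedding sends $r_1=c_1=1$, and by \lref{lemQ} every embedding factors through $\grp{P}$. The target I aim for is the Baumslag group $G=\langle a,b\mid b=[b,b^a]\rangle$ mentioned in the abstract: if I can show $\grp{P}\cong G$, then the embedding into an infinite group is immediate from \lref{lemQ}, and the finite-group obstruction will come from Baumslag's celebrated result that $b$ is non-trivial in $G$ but lies in the kernel of every homomorphism from $G$ to a finite group.

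Next I would carry out Tietze simplification of the presentation $\grp{P}=\langle X\mid\mathcal{R}\rangle$ normalised so that $r_1=c_1=1$. Each of the twelve cells of $P$ yields a defining relation $r_ic_j=P_{i,j}$ which I use to eliminate a generator: reading row~2 gives $r_2=a$, row~4 gives $r_4=b$, row~5 gives $r_5=c$; row~1 gives $c_2=b$ and $c_4=c$; then $(2,3,c)$ gives $c_3=a^{-1}c$; the two cells of row~3 jointly give $r_3=ab^{-1}$ together with the extra relation $c=aba^{-1}b$; cells $(4,5,d)$ and $(5,2,d)$ give $c_5=b^{-1}d$ and $d=cb$; finally $(6,4,b)$ forces $r_6=bc^{-1}$, after which $(6,5,c)$ contributes the remaining relation $b^{-1}cb=cb^{-1}c$. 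The anticipated miracle is that, after substituting $c=aba^{-1}b$ into $b^{-1}cb=cb^{-1}c$ and tidying, the word rearranges to exactly $b^2=a^{-1}b^{-1}a\cdot b\cdot a^{-1}ba$, i.e.\ $b=[b,b^a]$. All other introduced relators should then follow, so that $\grp{P}\cong G$. An explicit embedding into $G$ is read directly off the substitutions, provided I verify inside $G$ that the six row labels, five column labels and four symbol images are pairwise distinct.

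With $\grp{P}\cong G$ in hand, the finite-group part is short. Suppose $P$ embeds into a finite group $H$; by \lref{lemQ} the embedding factors as $\pi\circ\phi$ for some homomorphism $\pi\colon G\to H$. From the setup $c_1=1$ and $c_2=b$, injectivity of the column map forces $\pi(b)\ne 1$ in $H$. But by Baumslag's theorem, every finite quotient of $G$ kills $b$, so $\pi(b)=1$, a contradiction.

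The main obstacle is the algebraic simplification in the middle paragraph: verifying that the combined relation $b^{-1}cb=cb^{-1}c$ together with $c=aba^{-1}b$ is equivalent in the free group on $\{a,b\}$ to the Baumslag relator, and that no additional collapses occur among the eleven labels $r_1,\ldots,r_6,c_1,\ldots,c_5$ and the four symbols inside $G$. The latter requires working with a normal form for the Baumslag group, which is precisely the sort of word-problem instance that the introduction flags as generally undecidable; here this is where the paper's computer-algebra methods are essential, and where I would expect the bulk of the genuine work to lie.
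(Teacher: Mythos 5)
Your reduction to the Baumslag group is correct and is essentially the paper's own route: your substitutions agree with the paper's equation \eqref{eqBS2}, and your leftover relator $b^{-1}cb=cb^{-1}c$ together with $c=aba^{-1}b$ is the same relation the paper extracts from the cell $(4,5,d)$ (multiply your identity on the left by $b$ to get $aba^{-1}b^{2}=bab^{2}a^{-1}b$, which rearranges to $b=[b,b^{a}]$). The finite-group half of your argument is complete and matches the paper's, which likewise combines injectivity of a label map ($r_4=b\neq 1=r_1$ there, $c_2\neq c_1$ for you) with Baumslag's theorem that $b$ dies in every finite quotient. One sentence is overstated: the embedding into an infinite group is \emph{not} ``immediate from \lref{lemQ}'' once $\grp{P}\cong G$ is known, since the canonical maps into $\grp{P}$ need not be injective (this is exactly what fails in \egref{ex1}); you do correct this later by listing the distinctness check as a proviso.

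That distinctness check is the one genuine gap: you defer it to ``a normal form for the Baumslag group'' and computer algebra, but you neither carry it out nor need that machinery. The paper closes it with an elementary device: assume two of the fifteen labels coincide and show that the resulting extra relation forces $\langle A,B\rangle$ to be cyclic, contradicting Baumslag's theorem that $G$ is non-cyclic. For most of the $\binom{6}{2}+\binom{5}{2}+\binom{4}{2}=31$ pairs the coincidence alone yields cyclicity (e.g.\ $r_3=r_5$ gives $B^{-1}=BA^{-1}B$, hence $B^{-3}=A^{-1}$); only four pairs ($r_4=r_6$, $r_5=r_6$, $c_2=c_5$ and $b=d$) additionally require substituting the defining relation $B=[B,B^{A}]$, and each is a short hand computation (e.g.\ $r_5=r_6$ leads to $B^{A}=B^{-3}$ and thence to $B=[B,B^{A}]=1$). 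Without some such argument your proposal establishes only that $\grp{P}\cong G$, not that $P$ embeds in $G$, so this step must be supplied rather than flagged.
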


\begin{proof} 
In \cite{Bau69}, Baumslag defined the finitely presented group  $G=\langle A,B\mid B=[B,B^A]\rangle$, where, as usual, $B^A=A^{-1}BA$ and $[B,B^A]=B^{-1}B^{(B^A)}$. He showed that $G$ is infinite, non-cyclic, and $B=1$ in every finite quotient of $G$. We use these properties to prove the claim.
   
First, suppose we have an embedding $(\phi_1,\phi_2,\phi_3)\colon P\hookrightarrow H$ into some group $H$, with row and column labels $r_1,\ldots,r_6$ and $c_1,\ldots,c_5$, respectively. For simplicity, we identify $a,b,c,d$ with their images under $\phi_3$, and, by \lref{lemrc1}, we assume that $r_1=c_1=1$. We deduce from \fref{figBS2} that 
\begin{equation}\label{eqBS2}
\begin{aligned}
   r_2&=a,     \hspace*{2cm}&   c_2&=b,          \hspace*{2cm} & c&=aba^{-1}b,    \\
   r_3&=ab^{-1},             &   c_3&=ba^{-1}b,                  & d&=aba^{-1}b^2,   \\
   r_4&=b,                  &   c_4&=aba^{-1}b,                      \\         
   r_5&=aba^{-1}b,           &   c_5&=ab^2a^{-1}b,                      \\
   r_6&=ab^{-1}a^{-1}. 
\end{aligned}
\end{equation}
Hence, $bab^2a^{-1}b=r_4c_5=d=aba^{-1}b^2$, that is, $bab^2a^{-1}=aba^{-1}b$, and so $b^2=a^{-1}b^{-1}aba^{-1}ba$ and $b=b^{-1}a^{-1}b^{-1}aba^{-1}ba$, which can be written as $b=[b,b^a]$. This shows that the subgroup $K=\langle a,b\rangle$ of $H$ satisfies $b=[b,b^{a}]$. By von Dyck's Theorem, $K$ is an epimorphic image  of Baumslag's group $G=\langle A,B\rangle$ with  $A,B\in G$ mapped to $a,b\in K$, respectively. If $H$ is finite, then so is $K$, which forces $b=1$; so $r_4=b=1=r_1$, contradicting the injectivity of $\phi_1$. We conclude that $P$ cannot be embedded in a finite group.  

It remains to prove that $P$ embeds into the infinite Baumslag group $G$. Since the group defined by $P$ is isomorphic to $G$, we simply choose the row and column labels (and symbols) as in \eqref{eqBS2}, replacing $a$ and $b$ by $A$ and $B$, respectively. Since $G$ is non-cyclic by \cite{Bau69}, it is an easy exercise to verify that  the row labels are pairwise distinct, and similarly for the column labels and the symbols. For example, if $r_3=r_5$, then $B^{-1}=BA^{-1}B$, and so $B^{-3}=A^{-1}$, showing $\langle A,B\rangle$ is cyclic. The only nontrivial cases to consider are $r_4=r_6$, $r_5=r_6$, $c_2=c_5$ and $b=d$,  since for these it is necessary to use the defining relation $B=[B,B^A]$ as well. For example, it follows from $r_5=r_6$ that  $BA^{-1}B=B^{-1}A^{-1}$; substituting this in $B=[B,B^A]=B^{-1}A^{-1}B^{-1}ABA^{-1}BA$, one obtains $B=B^{-1}A^{-1}B^{-1}A B^{-1}$. This yields $B^A=B^{-3}$, and now it follows from $1=[B,B^A]=B$ that $\langle A,B\rangle$ is cyclic, again yielding a contradiction.
\end{proof}

\begin{figure}
\[
\begin{array}{|c||c|c|c|c|c|}\hline
& 1 & c_2&c_3&c_4&c_5\\\hline\hline
1& & b & & c &   \\\hline
r_2&a&   & c & & \\\hline
r_3& & a & b & & \\\hline
r_4&b&  & &  &d \\\hline
r_5&c&d   & &   &\\\hline
r_6& &   & & b& c\\\hline
  \end{array}
\]%
\caption{\label{figBS2}The embedding of the PLS $P$ in \lref{lemBS2}.}  
\end{figure}

\section{Computational methods} 

To guarantee that we have found a PLS of smallest possible size which embeds into an infinite group, but in no finite group, we have to show that every PLS of smaller size either embeds in some finite group, or does not embed in any group. In this section we describe some computational methods which help to test these properties. 

Throughout this section, let $P$ be an $m\times n$ PLS, with associated  presentation $\{X\mid\mathcal{R}\}$ and group $\grp{P}=\langle X\mid\mathcal{R}\rangle$.  If $P$ can be embedded in some group $H$, then $H$ is a quotient of $\grp{P}$, and $P$ also embeds in $\grp{P}$, see \lref{lemQ}. Therefore our aim is to find an embedding of $P$ in some finite quotient of $\grp{P}$, or to show that $P$ cannot be embedded in any group. The latter happens, for example, if $\grp{P}$ is the trivial group. As mentioned in the introduction, it is in general an algorithmically undecidable  problem to determine whether a finitely presented group is trivial or finite. Below we comment on our approaches, which avoided this difficulty.

\subsection{Proving that a PLS cannot be embedded into any group}\label{secNE} 

Using the relations $\mathcal{R}$ of $\grp{P}$, we can reduce the number of generators of $\grp{P}$. More precisely, we use Tietze transformation (see \cite[\S2.4.4]{handbook}) to obtain a second presentation $\{\hat{X}\mid\hat{\mathcal{R}}\}$ of $\grp{P}$, which usually has significantly fewer generators and relations. We call this presentation a \myem{reduced presentation} defined by $P$. In  the examples we have considered, that is, PLS of size at most 12, there are usually one to three generators in  $\hat{X}$ and one to three relations in $\hat{\mathcal{R}}$. We can use GAP to do these Tietze transformations, to construct an explicit isomorphism
\[\varphi\colon \langle \hat{X}\mid\hat{\mathcal{R}}\rangle\to \grp{P},\]
and to write the original generators  
\[X=\{R_i,C_j: (i,j)\in\shape(P)\}\cup\Lambda(P)\] 
as words in the new generators $\hat{X}$. After this rewriting, it often  becomes  visible that there are duplicates in the list $(R_1,\ldots,R_m)$, in $(C_1,\ldots,C_n)$, or in $\Lambda(P)$, each considered as a list of elements in the free group generated by $\hat{X}$. If this is the case, then we have established that $P$ cannot be  \emph{embedded} into $\grp{P}$. Since any embedding of $P$ into some group $H$ implies that $H$ is a quotient of $\grp{P}$, this proves that $P$ cannot be embedded in \emph{any} group. \egref{ex1}  illustrates this with an explicit PLS.
 
\begin{example}\label{ex1}
The PLS 
\[
P=\begin{array}{|c|c|c|c|}\hline
a&d & & \\\hline
 & a & d & \\\hline
& b & & c \\\hline
c& & b&a\\\hline
  \end{array}
\]yields a presentation $\{X\mid\mathcal{R}\}$ where $X=\{R_2,R_3,R_4,C_2,C_3,C_4,a,b,c,d\}$ and
\[\mathcal{R}=\{a,C_2d^{-1},R_2C_2a^{-1},R_2C_3d^{-1},R_3C_2b^{-1}, R_3C_4c^{-1}, R_4c^{-1},R_4C_3b^{-1},R_4C_4a^{-1}\}.
\]
Note that we can assume $R_1=C_1=1$ by \lref{lemrc1}. We construct $G=\grp{P}$ with the GAP code in \fref{figgap}. Applying Tietze transformations reveals that  $G\cong\mathbb{Z}$ is a one-generator group with no relations. We then express $X$ in terms of the new generator and obtain that $c=d$ in $G$. This shows that $P$ cannot be embedded in any group: in any such embedding, we would have $c=d$, a contradiction.
 
\begin{figure}[ht]
{\footnotesize
\begin{verbatim}
##############################################################################
#### first construct G=<P>
##############################################################################
gap> F:=FreeGroup(["r2","r3","r4","c2","c3","c4","a","b","c","d"]);;
gap> AssignGeneratorVariables(F);;
gap> R:=[a^-1,c2*d^-1,r2*c2*a^-1,r2*c3*d^-1,r3*c2*b^-1,
>        r3*c4*c^-1,r4*c^-1,r4*c3*b^-1,r4*c4*a^-1];;
gap> G := F/R;;
##############################################################################
#### Tietze transformations show that <P> is the infinite cyclic group
##############################################################################
gap> P:=PresentationFpGroup(G);;  
gap> TzInitGeneratorImages(P);;
gap> TzGoGo(P);
#I  there are 1 generator and 0 relators of total length 0
##############################################################################
#### now set-up homomorphisms to write original generators as a power of the
#### single new generator: we see that the generators c and d are the same
##############################################################################
gap> hom := GroupHomomorphismByImages(F,Group(GeneratorsOfPresentation(P)),
>                   GeneratorsOfGroup(F),TzImagesOldGens(P));;
gap> List([c2,c3,c4,r2,r3,r4,a,b,c,d],x->x^hom);
[ r4, r4^2, r4^-1, r4^-1, r4^2, r4, <identity ...>, r4^3, r4, r4 ] 
# thus, in G we have c=d
\end{verbatim}
}
\caption{\label{figgap}GAP code for \egref{ex1}.}
\end{figure}
\end{example}

What we have described is a very fast test to identify some PLS which cannot be embedded into any group: this is possible because our check for duplicates (in the lists of symbols, row labels, and column labels) is in the free group generated by $\hat{X}$, where the word problem is trivial. 

If this test does not establish that we cannot embed the given PLS in any group (and our attempts to find an embedding fail, cf.\ the approaches described below), then we proceed as follows. Again, let $\hat{X}$ and $\hat{\mathcal{R}}$ be as above. We use the Knuth-Bendix completion algorithm (provided by the GAP package KBMAG, see also \cite[\S12.2]{handbook}) to construct from $\hat{\mathcal{R}}$ a \emph{rewriting system} which can be used to rewrite an element of the group $\grp{P}=\langle \hat{X}\mid\hat{\mathcal{R}}\rangle$ in so-called \emph{reduced form}. If this rewriting system happens to be \emph{confluent}, then two elements in $\grp{P}$ are identical if and only if their reduced forms coincide. If the rewriting system is not confluent, then it can still happen that two different reduced forms correspond to the same element. In all our examples, this method was sufficient to prove that a PLS which we failed to embed into some finite group can in fact not be embedded into any group. We illustrate this in \egref{ex2}.

\begin{example}\label{ex2}
Starting with the PLS
\[
P=\begin{array}{|c|c|c|c|}\hline
a&b&c&\\\hline
b&d&&c\\\hline
c&&d&\\\hline
&e&&a\\\hline
 \end{array},
\]
GAP finds a reduced presentation $\{ u,v \mid v^2 u^{-2}\}$ for $\grp{P}$ with row labels, column labels, and symbols as given in \fref{figgap2}. Considering the elements in these lists in the free group on $\{u,v\}$, GAP cannot detect duplicates. However, after applying the Knuth-Bendix algorithm, we identify duplicates among the lists of reduced forms; this proves that $P$ cannot be embedded in any group. 
 
\begin{figure}[ht]
{\footnotesize
\begin{verbatim}
##############################################################################
#### start with a reduced presentation of G=<P>
##############################################################################
gap> F := FreeGroup(["u","v"]);;
gap> AssignGeneratorVariables(F);;
gap> G := F/[v^2*u^-2];;    
##############################################################################
#### the row labels, column labels, and symbols wrt this presentation;
#### considered as elements in F, these lists contain no duplicates;
###  however, note that v^-1*u^2 = v in G
##############################################################################
gap> col := [ u^0, u, v, v^-1*u ];;   row := [ u^0, u, v, u^-1*v ];;
gap> sym := [ u^0, u, v, u^2, v^-1*u^2 ];;
##############################################################################
#### using Knuth-Bendix and reduced forms to spot the duplicates
##############################################################################
gap> rws := KBMAGRewritingSystem(G);;
gap> OR  := OptionsRecordOfKBMAGRewritingSystem(rws);;    
gap> OR.maxeqns := 2000;;    
gap> KnuthBendix(rws);;  
gap> FF  := FreeStructureOfRewritingSystem(rws);;
gap> iso := GroupHomomorphismByImages(F,FF,GeneratorsOfGroup(F),
                                           GeneratorsOfGroup(FF));;
gap> symn:= List(sym,x->ReducedForm(rws,Image(iso,x)));
#WARNING: system is not confluent, so reductions may not be to normal form.
[ <identity ...>, u, v, u^2, v ]
gap> IsDuplicateFreeList(symn);
false
\end{verbatim}
}%
\caption{\label{figgap2}GAP code for \egref{ex2}.}
\end{figure}
\end{example}

\subsection{Proving that a PLS can be embedded into an abelian group}\label{secAb}

Suppose that $P$ can be embedded into an abelian group $H$. Since $H$ is a quotient of $\grp{P}$, this embedding lifts to an embedding $\phi\colon P\hookrightarrow A$ into the abelianisation $A=\grp{P}/\grp{P}'$ of $\grp{P}=\langle X\mid\mathcal{R}\rangle$; note that $A=\langle X\mid \mathcal{R}\cup\{[a,b]: a,b\in X\}\rangle$. On the other hand, we have seen in \lref{lemInfAb} that any such embedding gives rise to an embedding of $P$ into a finite abelian group. It is straightforward to check whether $P$ can be embedded in $A$: if this is the case, then $P$ can be embedded into a finite abelian group; if not, then we have proved that $P$ cannot be embedded in any abelian group.

\subsection{Finding an embedding into a nonabelian finite group}\label{secBF}

Let $P$ be as before, and suppose the method described in \sref{secAb} shows that $P$ cannot be embedded into an abelian group. Also, suppose that the method in \sref{secNE} did not conclude that $P$ fails to embed in any group. We now want to check whether $P$ can be embedded into a finite nonabelian group. Recall that if $\phi\colon P\hookrightarrow H$ is such an embedding, then $H$ is a finite quotient of $\grp{P}$. As before, we first apply Tietze transformations to find a shorter presentation of $\grp{P}$, say $\grp{P}\cong \langle \hat{X}\mid \hat{\mathcal{R}}\rangle$. We then use several \emph{quotient algorithms} and brute-force techniques to look for an embedding.\smallskip

{\bf Brute-force techniques I.} Using the small generating set $\hat{X}$, we can simply enumerate all possible homomorphisms from $\grp{P}$ into some small finite group $H$, say of order at most 24. We check if any of these homomorphisms yields an embedding of $P$.\smallskip

{\bf Nilpotent quotients.} We use the nilpotent quotient algorithm in GAP (provided by the GAP package NQ, see also \cite[\S9.4.3]{handbook}) to compute the largest quotients of $\grp{P}$ having nilpotency class $c=1,2,3,\ldots$. (For $c=1$ this quotient is the abelianisation of $\grp{P}$.) If $P$ embeds in one of these quotients, say $K_c=\grp{P}/\gamma_{c+1}(\grp{P})$, then we try to embed $P$ into a finite quotient of $K_c$, by adding several random relations. If $P$ does not embed into $K_c$, then $P$ does not embed in any nilpotent group of class $c$. Note that one can very efficiently compute with finitely presented nilpotent groups.\smallskip

{\bf Low Index Subgroups.} Based on the work in \cite{AH}, there exist efficient methods to compute with finite-index subgroups of finitely presented groups. For example, all subgroups of $\grp{P}$ of index at most $i$, where $i$ is some small integer, say $i\leq 6$, can be computed, up to conjugacy, with the GAP command {\tt LowIndexSubgroupsFpGroup}. As explained in \cite{AH}, these subgroups are stored efficiently so that one can compute their intersection $U$, and a permutation representation of $\grp{P}$ on the cosets of $U$. This yields a homomorphism from $\grp{P}$ into the symmetric group of degree $[\grp{P}:U]$, the index of $U$ in $\grp{P}$. We can readily check whether $\grp{P}$ embeds in this finite symmetric group.\smallskip

We note that, in our examples, the previous three techniques (and the methods described in \sref{secNE} and \sref{secAb}) were sufficient to deal with all occurring PLS. In practice, the Low Index Subgroup method was only required to deal with certain PLS of size 12. Below we list one other approach which could be used to consider other examples.\smallskip

{\bf Brute-force techniques II.} 
If the number of generators in $\hat{X}$ is larger than the number of relations in $\hat{\mathcal{R}}$, then $\grp{P}$ is infinite. In this case, we add some random relations to $\hat{\mathcal{R}}$ until the number of relations is at least the number of generators. We then try to compute the order of the new group using coset-enumeration techniques (see \cite[\S5]{handbook}). If the number of cosets used becomes too big, we abort this computation. (Unfortunately, the number of cosets used is not immediately related to the actual order of the group.) If we determine that the group is finite, then we check whether we have found an embedding of $P$.

\section{The candidates}\label{s:candidates}

Our mission is to find all of the smallest PLS that embed in an
infinite group, but do not embed in any finite group. On the basis of
the example given in \sref{s:eg12}, we only need to consider PLS of
size at most $12$.  Also, as mentioned in the introduction, it
suffices to consider one representative from each species of
PLS. Nevertheless, there are a great many species. 
So, in this section, we discuss techniques which allow us to
prune the list of species that we have to consider to a manageable number.


In this section, $\Pi_i$ denotes the 
projection of a triple onto its $i$-th coordinate, and $\Pi_{i,j}$ denotes 
the projection onto coordinates $i,j$.
For every PLS $P$ there is an associated graph formed by taking the
triples $\trip(P)$ of $P$ as vertices and joining two vertices by an
edge if the corresponding triples agree in any coordinate. We say that
$P$ is \myem{connected} if the associated graph is connected,
otherwise it is \myem{disconnected}. For our purposes, we only need to
consider connected PLS, because of our next result. For this result
only, we need to broaden our definition of embedding to allow the row
and column indices of a PLS to be any set of positive integers, rather
than insisting that they are consecutive starting at 1. However, this
is merely a technicality, which can be overcome by appropriate relabelling.

\begin{lemma}\label{l:connect}
Let $P,P_1,P_2$ be PLS for which
$\trip(P)=T_1\cup T_2$, where $T_i=\trip(P_i)$ for $i=1,2$ and
$\Pi_i(T_1)\cap\Pi_i(T_2)=\emptyset$ for $i=1,2,3$.
Suppose that there are embeddings
$P_1\hookrightarrow G$ and $P_2\hookrightarrow H$
for groups $G,H$. Then $P$ embeds into $G\times H\times C$, 
where $C$ is a cyclic group of order $3$.
\end{lemma}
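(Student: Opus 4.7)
The plan is to construct an explicit embedding $\Phi=(\Phi_1,\Phi_2,\Phi_3)\colon P\hookrightarrow G\times H\times C$ by placing the given embeddings of $P_1$ and $P_2$ into the first and second factors respectively, and using the cyclic factor $C$ as a ``tag'' that keeps the rows, columns and symbols contributed by $P_1$ separated from those contributed by $P_2$.

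Write $C=\langle t\mid t^3=1\rangle$, and let $(\phi_1,\phi_2,\phi_3)\colon P_1\hookrightarrow G$ and $(\psi_1,\psi_2,\psi_3)\colon P_2\hookrightarrow H$ be the given embeddings. The hypothesis $\Pi_i(T_1)\cap\Pi_i(T_2)=\emptyset$ for $i=1,2,3$ says that the row indices, column indices, and symbols of $P$ are partitioned into those coming from $P_1$ and those coming from $P_2$, so it is legitimate to define each $\Phi_i$ piecewise. Explicitly, for a row index $r$ of $P$ set $\Phi_1(r)=(\phi_1(r),1,1)$ if $r$ is a row of $P_1$, and $\Phi_1(r)=(1,\psi_1(r),t)$ if $r$ is a row of $P_2$; define $\Phi_2$ on column indices in the same way. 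For a symbol $s$ of $P$, set $\Phi_3(s)=(\phi_3(s),1,1)$ if $s$ belongs to $P_1$ and $\Phi_3(s)=(1,\psi_3(s),t^2)$ if $s$ belongs to $P_2$.

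Verifying the multiplicative condition is then routine. For each $(r,c,s)\in T_1$ one computes $\Phi_1(r)\Phi_2(c)=(\phi_1(r)\phi_2(c),1,1)=(\phi_3(s),1,1)=\Phi_3(s)$, while for each $(r,c,s)\in T_2$ one has $\Phi_1(r)\Phi_2(c)=(1,\psi_1(r)\psi_2(c),t\cdot t)=(1,\psi_3(s),t^2)=\Phi_3(s)$. Injectivity of each $\Phi_i$ is immediate: the two halves of its domain have distinct third coordinates ($1$ versus $t$ for rows and columns, $1$ versus $t^2$ for symbols), and within each half injectivity reduces to that of $\phi_i$ or $\psi_i$.

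There is no real obstacle, but it is worth remarking why the cyclic factor must have order at least three: tagging the rows and columns of $P_2$ by any nontrivial $t$ forces its symbols to be tagged by $t^2$, and for this to be distinguishable from the tag $1$ used on the symbols of $P_1$ we need $t^2\neq 1$. Hence $|C|=3$ is the smallest choice that makes the construction work, which is exactly the hypothesis of the lemma.
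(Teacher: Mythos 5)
Your construction is exactly the one used in the paper: rows and columns of $P_2$ are tagged by $g$ and its symbols by $g^2$, with the $P_1$ parts sent to the identity in the last two coordinates, and the verification of the multiplicative condition and injectivity proceeds identically. The proposal is correct, and your closing remark explaining why $|C|\ge 3$ is needed is a nice addition that the paper only hints at via the example in Figure~3.
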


\begin{proof}
Let $g$ denote a generator for $C$. Suppose that we have embeddings
$(\phi_1,\phi_2,\phi_3):P_1\hookrightarrow G$ and 
$(\phi'_1,\phi'_2,\phi'_3):P_2\hookrightarrow H$. It is routine to
verify that the three maps
\begin{align*}
x&\mapsto
\begin{cases}
(\phi_1(x),1_H,1_C)&x\in\Pi_1(T_1),\\
(1_G,\phi'_1(x),g)&x\in\Pi_1(T_2),\\
\end{cases}
\\
y&\mapsto
\begin{cases}
(\phi_2(y),1_H,1_C)&y\in\Pi_2(T_1),\\
(1_G,\phi'_2(y),g)&y\in\Pi_2(T_2),\\
\end{cases}
\\
z&\mapsto
\begin{cases}
(\phi_3(z),1_H,1_C)&z\in\Pi_3(T_1),\\
(1_G,\phi'_3(z),g^2)&z\in\Pi_3(T_2),\\
\end{cases}
\end{align*}
are injections that provide an embedding $P\hookrightarrow G\times H\times C$.
\end{proof}

In particular, \lref{l:connect} shows that if $P_1$ and $P_2$ both embed in
finite groups, then so does their disconnected union $P$. Also, if
either $P_1$ or $P_2$ cannot be embedded in an infinite group, then
it is immediate that $P$ cannot either. These two observations combine
to tell us that the smallest PLS that embeds in an infinite group but
not in any finite group will be connected.
As an aside, we note that the disconnected PLS in \fref{figNotC6} does
not embed in the cyclic group of order 6, showing that the cyclic
factor $C$ in \lref{l:connect} cannot be abandoned.

\begin{figure}[ht]
{
\[\begin{array}{|c|c|c|c|c|}\hline
a & b & & & \\\hline
b & a & & & \\\hline
  &   & c & d & e \\\hline
  &   & d & e & c \\\hline
  &   & e & c & d \\\hline
\end{array}
\]}%
\caption{\label{figNotC6}A disconnected PLS, 
which does not embed into the cyclic group of order 6.}
\end{figure}  


In \cite{WW17}, various restrictions were shown in terms of which PLS
need to be considered when studying embeddings in groups. The next
lemma is a direct adaptation of \cite[Lem.10\&11]{WW17}, which helps
us to rule out many PLS.

\begin{lemma}\label{l:rulenoopt}
Let $P$ be a PLS and
$T=\{(i,j,P_{i,j}):(i,j)\in\shape(P)\}$. Suppose that either
\begin{enumerate}
\item[\rm 1)]
there exists $(r,c,s)\in T$ such that 
$T'=T\setminus\{(r,c,s)\}$ satisfies
$r\notin\Pi_1(T')$ and for each $(r',c',s')\in T'$ 
either $(r',c)\in\Pi_{1,2}(T')$ or $(r',s)\in\Pi_{1,3}(T')$, or
\item[\rm 2)] there exists $\ell\ge1$ and distinct
$(r,c_1,s_1),\dots,(r,c_\ell,s_\ell)\in T$ for which
$T'=T\setminus\{(r,c_i,s_i):1\le i\le\ell\}$ satisfies $r\notin\Pi_1(T')$
and $$\{1\le i\le\ell:c_i\in\Pi_2(T')\}\cap\{1\le i\le\ell:s_i\in\Pi_3(T')\}=\emptyset.$$
\end{enumerate}
Then the smallest PLS to embed in an infinite group, but not in any
finite group, cannot be in the same species as $P$.
\end{lemma}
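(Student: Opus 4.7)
The plan is to associate to $P$ the strictly smaller PLS $P'$ whose triple set is $T'$ (as defined in the statement), and to establish the following extension lemma: every embedding $(\phi_1,\phi_2,\phi_3)\colon P'\hookrightarrow G$ extends to an embedding of $P$ into $G\times\mathbb{Z}_n$ for some integer $n\ge 1$. Granting this, suppose for contradiction that a smallest PLS with the stated property lies in the species of $P$; since the property is a species invariant, we may take $P$ itself. Any embedding of $P$ into an infinite group restricts to an embedding of $P'$, so $P'$ embeds in some group. The argument then splits into two cases: if $P'$ embeds in a finite group, the extension lemma yields an embedding of $P$ into a finite group, contradicting the property of $P$; otherwise $P'$ itself embeds in some group but not in any finite one, so $P'$ possesses the stated property, and being strictly smaller than $P$ it contradicts minimality.

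For the extension lemma under condition~(1), $\phi_1(r)$ is unconstrained by $P'$ (since $r\notin\Pi_1(T')$), and the relation $\phi_1(r)\phi_2(c)=\phi_3(s)$ must be enforced. When both $c\in\Pi_2(T')$ and $s\in\Pi_3(T')$, the values $\phi_2(c)$ and $\phi_3(s)$ are already fixed, forcing $\phi_1(r)=\phi_3(s)\phi_2(c)^{-1}$, and the hypothesis is exactly what is needed to prove $\phi_1(r)\neq\phi_1(r')$ for every $r'\neq r$: either $(r',c)\in\Pi_{1,2}(T')$, yielding a triple $(r',c,s'')$ with $s''\neq s$ (else column $c$ would contain $s$ twice), and so $\phi_1(r')=\phi_3(s'')\phi_2(c)^{-1}\neq\phi_1(r)$; or $(r',s)\in\Pi_{1,3}(T')$, yielding a triple $(r',c'',s)$ for which $\phi_1(r')=\phi_1(r)$ would force $c''=c$ by cancellation in $\phi_1(r')\phi_2(c'')=\phi_3(s)=\phi_1(r)\phi_2(c)$, again placing $s$ twice in column $c$. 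If instead $c\notin\Pi_2(T')$ or $s\notin\Pi_3(T')$, then one (or both) of $\phi_2(c),\phi_3(s)$ is a free choice; the finitely many injectivity obstructions can be avoided by embedding into $G\times\mathbb{Z}_n$ for $n$ large, giving the freshly assigned labels nonzero $\mathbb{Z}_n$-coordinates and thereby separating them automatically from the existing labels (all of which have zero coordinate).

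Under condition~(2) the entire row $r$ is removed. The hypothesis ensures that for each $i$ at most one of $\phi_2(c_i),\phi_3(s_i)$ is already determined by $P'$. We pick $\phi_1(r)\in G\times\mathbb{Z}_n$ with a fresh $\mathbb{Z}_n$-coordinate and, for each $i$, define the undetermined one of $\phi_2(c_i),\phi_3(s_i)$ (choosing any convenient value in the doubly-free case) via $\phi_1(r)\phi_2(c_i)=\phi_3(s_i)$. Injectivity of the three extended maps reduces to avoiding finitely many forbidden $\mathbb{Z}_n$-coordinates, which is achievable for $n$ sufficiently large.

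The main technical obstacle is the sub-case analysis in condition~(1) when $c$ or $s$ fails to lie in $T'$: the hypothesis then forces every row of $T'$ to meet the other coordinate, and the extension must introduce fresh column or symbol labels, but in each configuration the direct-product trick with a sufficiently large cyclic group reduces injectivity to finitely many avoidable constraints. Once the extension lemma is proved, the dichotomy above immediately excludes the species of $P$ from containing a smallest such PLS.
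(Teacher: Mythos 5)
The paper gives no proof of this lemma at all: it is stated as ``a direct adaptation of \cite[Lem.10\&11]{WW17}'', so there is nothing in the paper to compare against line by line, and your job was effectively to reconstruct the omitted argument. Your reconstruction is correct and is the natural one: pass to the strictly smaller PLS $P'$ with triple set $T'$, prove that any embedding $P'\hookrightarrow G$ extends to an embedding of $P$ into $G\times\mathbb{Z}_n$ (which is finite whenever $G$ is --- this is precisely the point of adapting the embeddability results of \cite{WW17} to the finite-versus-infinite setting), and then run the minimality dichotomy, using that embeddability in a given group is a species invariant. The two key verifications are both sound: in case (1), when $\phi_2(c)$ and $\phi_3(s)$ are already determined, the row-covering hypothesis together with the Latin property of $P$ (symbol $s$ cannot occur twice in column $c$) forces the assigned value $\phi_1(r)=\phi_3(s)\phi_2(c)^{-1}$ to differ from every existing row label, so the extension stays inside $G$; in the remaining subcase of (1) and in case (2), the disjointness hypothesis leaves $\phi_1(r)$ (and the undetermined labels) as genuinely free parameters, and tagging every freshly assigned label with a nonzero $\mathbb{Z}_n$-coordinate kills the finitely many injectivity constraints against the old labels, while distinctness of the new labels among themselves follows from $c_i\ne c_j$ and $s_i\ne s_j$ within row $r$.
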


For ease of exposition, \lref{l:rulenoopt} has not been stated in a
form that displays the symmetry between the three coordinates in
$T$. Hence when using it to check whether we need to consider a
particular PLS, it is important to consider all three PLS obtained by
uniformly cyclically permuting the triples in~$T$. We call any
connected PLS that cannot be ruled out using \lref{l:rulenoopt} a
\myem{candidate}. \Tref{tab:nrcand} shows, for each size up to 12, the
number of all species of PLS, species of connected PLS, and species of
candidate PLS. 
The catalogue of PLS for each size $s$ was generated by extending each
species representative of size $s-1$ in all possible different ways,
then using \texttt{nauty}~\cite{nauty} to screen for species
representatives.
The number of species of PLS for sizes at most $8$ has
been independently confirmed by F\'alcon and Stones~\cite{FS17}.  The
number of species of PLS for sizes at most $7$ were previously
reported in \cite{WW17}.

\begin{table}
\begin{center}
{\small\noindent
\begin{tabular}{l||r|r|r|r|r|r|r|r|r|r|r|r}
size&1&2&3&4&5&6&7&8&9&10&11&12\\
\hline\hline
all&1&2&5&18&59&306&1861&15097&146893&1693416&22239872&327670703\\
conn.&1&1&3&11&36&213&1405&12274&125235&1490851&20003121&299274006\\
cand.&0&0&0&2&0&11&50&489&6057&92533&1517293&27056665\\
\end{tabular}}
\caption{\label{tab:nrcand}Number of species of candidate PLS of size at most 12.}
\end{center}
\end{table}

\section{The outcome}

In this section we report the results of our computations, which employed
the techniques developed in the previous sections. 

For each candidate PLS $P$ as identified in \sref{s:candidates}, we
proceeded as follows. First, we computed $\grp{P}$ and a reduced
presentation $\grp{P}\cong \langle
\hat{X}\mid\hat{\mathcal{R}}\rangle$, see \sref{secNE}. If we then found
duplicates in the list of row labels, column labels, or symbols,
then we knew that $P$ could not be embedded in any group, so we stopped
considering it. If we found no duplicates, then we proceeded with further
tests. If $\hat{\mathcal{R}}=\emptyset$,
then $\grp{P}$ was a free group and $P$ embedded into a finite group
(\cyref{cy:lemRF}); we then checked whether $P$ also embedded in a
finite abelian group (\lref{lemInfAb}). Similarly, if
$\hat{\mathcal{R}}\ne\emptyset$, then we first checked whether $P$
embedded in a finite abelian group. If not, then we applied Brute Force
Techniques I and the Nilpotent Quotients method (\sref{secBF}) to find
an embedding of $P$ into some finite nonabelian group. If we did not
succeed, then we applied the Knuth-Bendix completion algorithm (which
might tell us that $P$ cannot be embedded in any group, see
\sref{secNE}) and the Low Index Subgroup method (which might find an
embedding into a finite group, see \sref{secBF}). After all these
tests, there were only 50 species of PLS remaining for which
we did not reach a conclusion. All had size 12. 
Below we describe how we proved that
these PLS can be embedded into an infinite group, but in no finite
group. First we summarise in \Tref{tab1} the results of our
computations for each candidate of size at most
$12$. The headings on the columns of \Tref{tab1} indicate
which family of PLS is being counted, according to the following schema:

\medskip

\begin{tabular}{lcp{11cm}}
NE&:&  cannot be embedded in any group;\\
abelian&:&  can be embedded in a finite abelian group;\\
nonabelian&:&  can be embedded in a finite nonabelian group, but not in any abelian group;\\
infNotFin &:&  can be embedded in an infinite group, but not in any finite group.
\end{tabular}

\begin{table}[h]
{\small
\begin{center} 
\begin{tabular}{r||r|r|r|r}
size &  NE &            abelian &  nonabelian &   infNotFin\\\hline\hline
4    &     0  &        2    &        0     &   0  \\
6    &     0  &        10    &        1    &   0   \\
7    &     2  &       44    &        4     &  0  \\
8    &    16  &      435    &       38     & 0   \\
9    &   147  &     5447    &      463     &  0   \\
10   &  2402  &    82555    &     7576     &  0   \\
11   & 42884&    1338816    &   135593     & 0 \\
12   &854559 &  23520406    &  2681650     &    50     \\\hline 
\end{tabular}
\caption{\label{tab1}Counts of species of candidate PLS 
which embed in certain groups}
\end{center}
}
\end{table}

We comment on the 50 PLS of size 12 which can be embedded into an infinite group, but in no finite group. In all but 6 cases, the reduced presentation of $P$ is $\{a,b \mid b=[b,b^a]\}$, hence $\grp{P}$ is isomorphic to the Baumslag group $G=\langle a,b \mid b=[b,b^a]\rangle$ as defined in \lref{lemBS2}. In the remaining 6 cases, the reduced presentations show that $\grp{P}$ is isomorphic to
\[B_1=\langle a,b\mid b=[b,(b^{-2})^a]\rangle\quad\text{or}\quad B_2=\langle a,b\mid b=[b,(b^{2})^a]\rangle,\] respectively. In \fref{figB1B2} we present two PLS $P_1$ and $P_2$ with corresponding group $B_1$ and $B_2$, respectively.
In  $B_1$ we have $b^{-1}=[b^{-1},(b^{-2})^a]$, and we define  $H_1=\langle b^{-2},a\rangle$. In $B_2$ we observe  $[b^2,(b^2)^a]=[b,(b^2)^a]^b[b,(b^2)^a]=b^bb=b^2$, and we define $H_2=\langle b^2,a\rangle$. It is easy to verify by computer (using coset-enumeration) that $H_1=B_1$ and $H_2=B_2$; moreover, the function {\tt SearchForIsomorphism} provided by the computer algebra system Magma \cite{magma} proves that $G\to H_1$, $(a,b)\mapsto (a,b^{-2})$, and  $G\to H_2$, $(a,b)\mapsto (a,b^{2})$, are isomorphisms. In conclusion, $G\cong B_1\cong H_1\cong B_2\cong H_2$, and $b^{-2}$ and $b^2$ are trivial in every finite quotient of $H_1$ and $H_2$, respectively.

\begin{figure}[ht]
{
\[
P_1= \begin{array}{|c|c|c|c|c|c|}\hline
a&b&c&d&& \\\hline
b&e& & &c& \\\hline
&&e&&a&d\\\hline
&&&e&&a\\\hline
\end{array}
\hspace*{2cm}
P_2=\begin{array}{|c|c|c|c|c|}\hline
a&b&c& & \\\hline
b&d&&c&\\\hline
e&&&&d\\\hline
&&d&a& \\\hline
&&&e&a\\\hline
  \end{array}
\]}%
\caption{\label{figB1B2}Two PLS with corresponding groups $B_1$ and $B_2$, 
respectively.}
\end{figure} 
 
Each of the 50 PLS embeds into $\grp{P}$. This can be proved as in the proof of \lref{lemBS2} by showing that there are no duplicates among the list of symbols, row labels, and columns labels, respectively.  In our case, we showed by computer that if there were duplicates, then the resulting group $\grp{P}$ was cyclic, which we knew was not the case. This established that each of the 50 PLS embeds into $\grp{P}$. In each case, the structure of the embedded PLS implied that no embedding into a finite group exists. For this deduction we made frequent use of the facts that $b=1$ in any finite quotient of $G$ and $b^2=1$ in any finite quotient of $H_1$ or $H_2$. We thereby established \tref{t:main}. 
 
We conclude with two remarks. First, it is known that the word problem
in one-relator groups is solvable, but also that counter-intuitive
things can happen. For example, it is shown in \cite{mccool} that for
every integer $t\geq 3$, the group $\langle a,b\mid ba^t b^2
a^t\rangle$ satisfies $[b^3,a]=1$, even though the defining relator
has length $2t+3$. This indicates that one has to be careful when
checking for embeddings into one-relator groups such as the Baumslag
group. Words much shorter than the defining relator may, in principle,
equate to the identity.
Second, recall that Hirsch and Jackson found a PLS of size 29
which can be embedded in an infinite group but in no finite group. As
shown in \cite[Ex.3.7]{HJ12}, their PLS embeds into the group 
$\langle a,b,c,d\mid ab=b^2a,\; bc=c^2b,\; cd=d^2c,\; da=ad^2\rangle$ 
defined by Higman, who also proved that this group has no nontrivial
finite quotients. Thus the example in \cite{HJ12} works for a different 
underlying reason than the 50 examples mentioned in \tref{t:main}. 
Data from this project, including those 50 examples, 
can be downloaded from \cite{WWWW}.

  \let\oldthebibliography=\thebibliography
  \let\endoldthebibliography=\endthebibliography
  \renewenvironment{thebibliography}[1]{%
    \begin{oldthebibliography}{#1}%
      \setlength{\parskip}{0.4ex plus 0.1ex minus 0.1ex}%
      \setlength{\itemsep}{0.4ex plus 0.1ex minus 0.1ex}%
  }%
  {%
    \end{oldthebibliography}%
  }

\section*{References}

\bibliographystyle{plain}

\end{document}